\definecolor{darkgreen}{rgb}{0.00,0.50,0.10}
\definecolor{lightgreen}{rgb}{0.20,0.70,0.30}
\newtheorem{theorem}{Theorem}
\newtheorem{lemma}[theorem]{Lemma} 
\newtheorem{proposition}[theorem]{Proposition}
\newtheorem{definition}[theorem]{Definition}
\newtheorem{conjecture}[theorem]{Conjecture}
\newcommand{\By}[2]{\overset{\mbox{\tiny{#1}}}{#2}}
\newcommand{\F}{\mathbb{F}}
\newcommand{\N}{\mathbb{N}}
\newcommand{\R}{\mathbb{R}}
\newcommand{\upe}{\mathrm{e}}
\newcommand{\upC}{\mathrm{C}}
\newcommand{\upK}{\mathrm{K}}
\newcommand{\upV}{\mathrm{V}}
\newcommand{\upZ}{\mathrm{Z}}
\newcommand{\Prob}{\textup{\textsf{P}}}
\title[Hamilton space of random graphs]{When Hamilton circuits generate 
the cycle space of a random graph}
\author{Peter Heinig}
\address{Zentrum~Mathematik, M9, Technische~Universit{\"a}t~M{\"u}nchen, \newline
Boltzmannstra{\ss}e~3, D-85747 Garching~bei~M{\"u}nchen, Germany} 
\email{heinig@ma.tum.de}
\begin{document}

\begin{abstract}
If $\varepsilon>0$ and $p(n)\geq n^{-\frac12+\varepsilon}$, 
in a binomial random graph $G_{n,p}$ a.a.s. the set of cycles which can be 
constructed as a symmetric difference of Hamilton circuits is as large as 
parity by itself permits (all cycles if $n$ is odd, all even cycles if $n$ is even). 
Moreover, every $p$ which ensures the above property a.a.s. must necessarily be 
such that for any constant $c>0$, 
eventually $p(n)\geq\tfrac{\log n + 2\log\log n + c}{n}$. 
So whatever the smallest sufficient $p$ for an a.a.s. 
Hamilton-generated cycle space might be, 
it does not coincide with the threshold for hamiltonicity of $G_{n,p}$. 

\medskip
\noindent
{\it Keywords:} cycle space, binomial random graph, 
finite-dimensional vector spaces, \\ Hamilton circuit, Hamilton-connected, 
consequences of structured spanning subgraphs

\end{abstract}

\maketitle 

\vspace{-2em}

\section{Introduction}

In the face of the apparent intractability of efficiently describing the 
set of all finite hamiltonian graphs, one realistic strategy to acquire more 
knowledge is the study of \emph{slices} of that complicated set: under the 
assumption that a known sufficient condition for hamiltonicity holds, one tries to 
find proofs that such a graph has \emph{extra properties}, 
such as having \emph{many} Hamilton circuits, 
or having many \emph{well-distributed} Hamilton circuits; 
examples in the deterministic setting (with the sufficient 
condition being a mininum-degree-condition) are to be found in \cite{MR2520274} and 
\cite{KUEHNLAPINSKASOSTHUSoptimalpackingsofhamiltoncyclesingraphsofhighminimumdegree}. 
In \cite[Theorem~1]{arXiv:1112.5101v1}, another extra property, 
a sort of `richness-' or `universality-' property (the set of all Hamilton circuits 
\emph{generates the cycle space}), previously known for certain Cayley-graphs 
only \cite{MR1057481}, was proved to hold in graphs of minimum degree slightly 
above Dirac's bound for hamiltonicity.

There are known examples for the above paradigm in a random setting as
well (e.g., \cite{MR860585} \cite{MR1164841} \cite{MR1094121} \cite{MR2877561}), 
and in this note we give another variation on that theme: we will combine some 
recent results in order to prove a new result, 
Theorem~\ref{540453.1053.0531.05432.06531.05120}, teaching us that sufficiently 
dense binomial random graphs a.a.s. have the above universality-property 
(and for a vastly smaller edge-probability than what would a.a.s. force the 
minimum-degree-condition from \cite[Theorem~1]{arXiv:1112.5101v1} to hold). 

We write $\upZ_1(G;\F_2)$ for the cycle space of a graph $G$ in the 
standard graph-theoretical sense, and $\mathcal{H}(G)$ for the set of 
all Hamilton circuits in $G$. The word `circuit' means `$2$-regular connected graph', 
while `cycle' means `element of $\upZ_1(G;\F_2)$'. 
For any graph $G$, the notation $\upZ_1(G;\F_2) = \langle\mathcal{H}(G)\rangle_{\F_2}$, 
in which $\langle \mathcal{H}(G)\rangle_{\F_2}$ denotes the $\F_2$-span of 
the (vectors w.r.t the standard basis of the edge-space describing the) 
Hamilton circuits of $G$, is equivalent to saying `every cycle of $G$ 
is a symmetric difference of Hamilton circuits of $G$'. 

We will phrase Theorem~\ref{540453.1053.0531.05432.06531.05120} in linear-algebraic 
language, to be able to efficiently use auxiliary results 
from \cite{arXiv:1112.5101v1}. However, the results can easily be translated 
into more constructive phrasings: 
\ref{653.0654312.063.0654312.0} and \ref{45613.0465130.4531.04563120} are 
equivalent to the statement that, \emph{a.a.s.}, the set of cycles which 
can be constructed as a symmetric difference of Hamilton circuits is as 
large as parity alone permits. I.e., \emph{all} cycles if $n$ is odd, 
and all \emph{even} cycles (i.e., elements of the cycle space having a 
support of even size, of which circuits are examples) if $n$ is even. 
One of these equivalences is a deterministic one: 
statement \ref{45613.0465130.4531.04563120} is deterministically 
equivalent to saying that for odd $n$, every cycle of $G$ 
is a symmetric difference of (edge-sets of) Hamilton-circuits of $G$. 
The other implication holds in an a.a.s. sense. 
Statement \ref{653.0654312.063.0654312.0} is not deterministically 
equivalent to saying that for even $n$, every even cycle 
is a symmetric difference of Hamilton circuits: 
if $n$ is even and $G$ happens to be \emph{bipartite}, then every 
even cycle being a symmetric difference of Hamilton circuits is equivalent to 
$\dim_{\F_2}\left(\upZ_1(G;\F_2)/\langle\mathcal{H}(G)\rangle_{\F_2}\right) = 0$, 
not $=1$. But when restricted to \emph{non-bipartite} graphs only---and $G_{n,p}$ with 
the present $p$ is non-bipartite a.a.s.---statement \ref{653.0654312.063.0654312.0} 
is equivalent to saying that for even $n$, every even cycle is a symmetric difference 
of Hamilton-circuits: $G\sim G_{n,p}$ with $p(n)\geq n^{-1/2+\varepsilon}$ 
a.a.s. contains triangles. Since $n$ is even, if there were an even cycle 
$z\in\upZ_1(G;\F_2)$ not in the $\F_2$-span of the Hamilton-circuits of $G$, 
then this cycle together with some triangle would represent two linearly 
independent elements modulo $\langle\mathcal{H}(G)\rangle_{\F_2}$, in contradiction 
to $\dim_{\F_2}\left(\upZ_1(G;\F_2)/\langle\mathcal{H}(G)\rangle_{\F_2}\right)=1$, 
the latter being guaranteed to hold a.a.s. by \ref{653.0654312.063.0654312.0}. 

All graphs considered, 
the property of graphs $G$ with $\upZ_1(G;\F_2) = \langle\mathcal{H}(G)\rangle_{\F_2}$ 
(no Hamilton-connectedness required) is not monotone, so a priori one should 
be cautious of speaking of a threshold for that property in $G_{n,p}$. 
It seems likely, though, that for binomial random graphs $G_{n,p}$ the property 
can only arise `within a monotone property' and that there \emph{is} a threshold. 
Should it be possible to improve Theorem~\ref{dsafrwe5ytrdsf43wrte65egrwterfdx} in 
Section~\ref{dfrg55frgtr655getw435wter6ert456ertr54} to the threshold 
for \emph{Hamilton-connectedness} of $G_{n,p}$, then one would know that 
for any $p$ which a.a.s. ensures 
$\upZ_1(G;\F_2) = \langle\mathcal{H}(G)\rangle_{\F_2}$, 
the property necessarily comes together with Hamilton-connectedness, i.e. then 
one would know that, if a.a.s. $G_{n,p}$ $\in$ 
$\{G\colon$ $\upZ_1(G;\F_2) = \langle\mathcal{H}(G)\rangle_{\F_2}$ $\}$, 
then a.a.s. $G$ $\in$ 
$\{G\colon$ $\upZ_1(G;\F_2) = \langle\mathcal{H}(G)\rangle_{\F_2}$ $\}$ $\cap$ 
$\{G\colon$ $G$ Hamilton-connected $\}$, a monotone graph property. 
If this is the case, then by \cite[Theorem~1.1]{MR1371123} we would know that 
$\upZ_1(G;\F_2) = \langle\mathcal{H}(G)\rangle_{\F_2}$ has a sharp threshold. 
Altghough it is clear that a.a.s. 
$\upZ_1(G;\F_2) = \langle\mathcal{H}(G)\rangle_{\F_2}$ implies that a.a.s. 
any two adjacent vertices are connected by a Hamilton-path, it is not clear that 
it also implies that a.a.s. any two non-adjacent vertices are. 

\section{An upper bound for the smallest sufficient $p$}
\label{fdsgert6545345436544534w}

In this section we prove an upper bound on the smallest $p$ sufficient 
for $\upZ_1(G;\F_2) = \langle\mathcal{H}(G)\rangle_{\F_2}$ to a.a.s. hold in $G_{n,p}$: 

\begin{theorem}
\label{540453.1053.0531.05432.06531.05120}
If $\varepsilon>0$, $p\in[0,1]^{\N}$ with $p(n)\geq n^{-\frac12+\varepsilon}$, 
$\mathcal{H}(G)$ denotes the set of all Hamilton circuits of a graph $G$ 
and $\upZ_1(G;\F_2)$ its cycle space, then for $n\to\infty$ 
a random graph $G \sim G_{n,p}$ asymptotically almost surely 
has the following properties: 
\begin{enumerate}[label={\rm(\arabic{*})},leftmargin=2em]
\item\label{2.04531.078645312084563120}
the $\F_2$-span of all circuits having length $n$ or $n-1$ in $G$ 
is $\upZ_1(G;\F_2)$\quad , 
\item\label{653.0654312.063.0654312.0}
if $n$ is even, then 
$\dim_{\F_2}\left(\upZ_1(G;\F_2)/\langle\mathcal{H}(G)\rangle_{\F_2}\right)=1$ \quad , 
\item\label{45613.0465130.4531.04563120} 
if $n$ is odd, the $\F_2$-span of $\mathcal{H}(G)$ is $\upZ_1(G;\F_2)$ \quad . 
\end{enumerate}
\end{theorem}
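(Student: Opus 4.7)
The overall plan is to establish part \ref{2.04531.078645312084563120} directly from structural properties of $G_{n,p}$ at this density and then derive \ref{653.0654312.063.0654312.0} and \ref{45613.0465130.4531.04563120} from \ref{2.04531.078645312084563120} by the same kind of parity argument that already appears in the introduction for bipartite graphs. Concretely, I would carry out three stages.

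The first stage isolates the random-graph inputs. At $p(n)\geq n^{-1/2+\varepsilon}$ the edge-probability is enormously larger than the Hamiltonicity threshold $\log n/n$, so standard resilience-of-Hamiltonicity results for $G_{n,p}$ yield a.a.s.\ that (I) $G=G_{n,p}$ is Hamilton-connected, and (II) for every vertex $w\in V(G)$ the graph $G-w$ is Hamilton-connected. At the same density $G$ is a.a.s.\ non-bipartite and rich in triangles; this will be used only in the final parity step.

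The second and main combinatorial stage deduces \ref{2.04531.078645312084563120} from (I) and (II) by a deterministic argument in the spirit of \cite[Theorem~1]{arXiv:1112.5101v1}. Since $G$ is $2$-connected, $\upZ_1(G;\F_2)$ is generated by the circuits through any single fixed edge $uv$. Property (I) produces a length-$n$ circuit through $uv$. For every second edge $e'\in E(G)$, I would pick a vertex $w$ disjoint from the endpoints of $uv$ and $e'$ and use (II) to produce a length-$(n-1)$ circuit inside $G-w$ containing both $uv$ and $e'$. Symmetric-differencing the length-$n$ and length-$(n-1)$ circuits through $uv$ then yields an explicit representation of every fundamental cycle as an element of the $\F_2$-span of circuits of length $n$ and $n-1$. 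This is the place where the edge-swapping technique of \cite{arXiv:1112.5101v1} is reused, but now powered by (I) and (II) rather than by a Dirac-type minimum-degree hypothesis.

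The third stage derives \ref{653.0654312.063.0654312.0} and \ref{45613.0465130.4531.04563120} from \ref{2.04531.078645312084563120} using parity and the triangle-containment of $G$. For odd $n$, given any length-$(n-1)$ circuit $C\subset G-w$, pick an edge $u_1u_2$ of $C$ with $u_1w,\,u_2w\in E(G)$ (available a.a.s.\ by the codegree abundance at this density); then $H:=(C\setminus\{u_1u_2\})\cup\{u_1w,\,wu_2\}$ is a Hamilton circuit of $G$, and $C\triangle H$ is the triangle $u_1wu_2$, so length-$(n-1)$ circuits are, modulo Hamilton circuits, triangles. A short further argument using Hamilton circuits through a common triangle expresses such a triangle as a symmetric difference of Hamilton circuits, yielding \ref{45613.0465130.4531.04563120}. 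For even $n$, all Hamilton circuits lie in the codimension-$1$ subspace of even cycles; combining \ref{2.04531.078645312084563120}, the same triangle step, and non-bipartiteness of $G$ forces the quotient in \ref{653.0654312.063.0654312.0} to be exactly one-dimensional. The hard part will be the middle stage: upgrading (I) and (II) into a uniform construction of fundamental cycles inside the $\F_2$-span of circuits of length $n$ or $n-1$ that works for every edge of $G$ simultaneously; the random-graph inputs themselves are entirely standard at this density and will not be the bottleneck.
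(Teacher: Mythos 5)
Your proposal has genuine gaps, and they sit exactly where the real work of the theorem lies. The decisive one is your middle stage, which you yourself flag as ``the hard part'' but never carry out: it is the entire proof, not a technicality. Worse, the one concrete step you do describe there does not follow from your inputs. Hamilton-connectedness of $G-w$ gives a Hamilton path between any two prescribed \emph{vertices}, hence (adding the edge $uv$) a circuit of length $n-1$ through the single prescribed edge $uv$; it does \emph{not} give a Hamilton circuit of $G-w$ through \emph{two} prescribed independent edges $uv$ and $e'$ --- that is a strictly stronger property requiring its own argument. Your third stage has the same character. The claim that any length-$(n-1)$ circuit $C\subseteq G-w$ has an edge $u_1u_2$ with $u_1w,u_2w\in E(G)$ is false as a for-all-circuits statement near the bottom of the density range: for $p=n^{-1/2+\varepsilon}$, deleting all edges inside $N_G(w)$ removes only a $o(1)$-fraction of each vertex's degree, so by local resilience of Hamiltonicity $G-w$ a.a.s.\ contains Hamilton circuits along which $N_G(w)$ is an independent set, i.e.\ circuits admitting \emph{no} swap at $w$; ``codegree abundance'' controls common neighbours of pairs, not the placement of $N_G(w)$ along an adversarially chosen circuit. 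Finally, the ``short further argument'' placing a triangle in $\langle\mathcal{H}(G)\rangle_{\F_2}$ is precisely the linear-algebraic core of the whole subject: for odd $n$ a triangle must be a sum of an \emph{odd} number of Hamilton circuits, and swap-type manipulations only ever show that two short circuits are \emph{congruent} modulo $\langle\mathcal{H}(G)\rangle_{\F_2}$, never that one of them actually lies in the span. Nothing soft closes that loop.

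The paper's proof avoids every one of these issues by an idea absent from your plan: \emph{monotonicity}. The property $\upZ_1(G;\F_2)=\langle\mathcal{H}(G)\rangle_{\F_2}$ is not monotone by itself (the introduction stresses this), but the classes $\mathcal{M}_{\{\lvert\cdot\rvert\},0}$, $\mathcal{M}_{\{\lvert\cdot\rvert\},1}$, $\mathcal{M}_{\{\lvert\cdot\rvert-1,\lvert\cdot\rvert\},0}$, which conjoin the generation properties with Hamilton-connectedness, \emph{are} monotone (\cite[Lemma~18.(1)]{arXiv:1112.5101v1}). Hence one never has to do circuit surgery inside the random graph at all: by K{\"u}hn--Osthus \cite[Theorem~1.2]{MR3022146}, a.a.s.\ $G_{n,p}\supseteq \upC_n^2$ as a spanning subgraph, and \cite[Lemma~17]{arXiv:1112.5101v1} verifies membership of the one explicit graph $\upC_n^2$ in the appropriate class for each parity of $n$; monotonicity then transfers membership to $G_{n,p}$, giving \ref{2.04531.078645312084563120}, \ref{653.0654312.063.0654312.0} and \ref{45613.0465130.4531.04563120} at once. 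Your plan, by contrast, amounts to re-proving the deterministic main theorem of \cite{arXiv:1112.5101v1} directly on $G_{n,p}$ with Hamilton-connectedness replacing the Dirac-type hypothesis --- conceivably possible, but none of that work is present in the proposal, and the steps you do spell out fail as stated.
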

\begin{proof}[Proof of Theorem~\ref{540453.1053.0531.05432.06531.05120}]
We use some notation from \cite{arXiv:1112.5101v1}: we denote by $\upC_n^2$ 
the square of an $n$-vertex circuit. 
The symbol $\mathcal{M}_{\{\lvert\cdot\rvert\},0}$ denotes the set of 
all finite graphs which are simultaneously Hamilton-connected and 
have each cycle equal to a symmetric difference of Hamilton circuits, 
while $\mathcal{M}_{\{\lvert\cdot\rvert\},1}$ denotes the set of 
all finite graphs $G$ which are simultaneously Hamilton-connected and have 
$\dim_{\F_2}\left(\upZ_1(G;\F_2)/\langle\mathcal{H}(G)\rangle_{\F_2}\right)=1$. 
The symbol $\mathcal{M}_{\{\lvert\cdot\rvert-1,\lvert\cdot\rvert\},0}$ denotes 
the set of all graphs $G$ which simultaneously have the property that any 
two vertices are connected by a path of length at least $\lvert G\rvert-2$ and 
that the $\F_2$-span of the set of all circuits of length at least $\lvert G\rvert-1$ 
is equal to $\upZ_1(G;\F_2)$. 
By a recent theorem of K{\"u}hn and Osthus 
\cite[Theorem~1.2 specialized to $k=2$]{MR3022146}, 
we know that, asymptotically almost surely, $G_{n,p}$ contains $\upC_n^2$ 
as a subgraph. By (a3), (a4) and (a5) in 
\cite[Lemma~17]{arXiv:1112.5101v1}, 
$\upC_n^2\in\mathcal{M}_{\{\lvert\cdot\rvert\},0}$ if $n$ is odd, 
while both $\upC_n^2\in\mathcal{M}_{\{\lvert\cdot\rvert-1,\lvert\cdot\rvert\},0}$ 
and $\upC_n^2\in\mathcal{M}_{\{\lvert\cdot\rvert\},1}$ if $n$ is even. 
By \cite[Lemma~18.(1)]{arXiv:1112.5101v1}, 
each of the graph properties $\mathcal{M}_{\{\lvert\cdot\rvert\},0}$, 
$\mathcal{M}_{\{\lvert\cdot\rvert-1,\lvert\cdot\rvert\},0}$ and 
$\mathcal{M}_{\{\lvert\cdot\rvert\},1}$ is monotone. 
Therefore, a.a.s. $G_{n,p}$ itself is in $\mathcal{M}_{\{\lvert\cdot\rvert\},0}$ for 
odd $n$ (proving \ref{45613.0465130.4531.04563120}), and in 
both $\mathcal{M}_{\{\lvert\cdot\rvert\},1}$ (proving \ref{653.0654312.063.0654312.0}) 
and $\mathcal{M}_{\{\lvert\cdot\rvert-1,\lvert\cdot\rvert\},0}$ for even $n$. 
The latter completes the proof of \ref{2.04531.078645312084563120}, for although 
\ref{2.04531.078645312084563120} is formulated without a parity condition, in the 
case of odd $n$, \ref{2.04531.078645312084563120} is implied 
by \ref{45613.0465130.4531.04563120} anyway. 
\end{proof}

\section{A lower bound for the smallest sufficient $p$}
\label{dfrg55frgtr655getw435wter6ert456ertr54}

In this section we will derive a lower bound (larger than the hamiltonicity threshold) 
that any $p$ which a.a.s. ensures 
$\upZ_1(G_{n,p};\F_2) = \langle\mathcal{H}(G_{n,p})\rangle_{\F_2}$ must satisfy. 
We prepare with a few lemmas. 

\begin{lemma}\label{dsfgty36yetdfs4r34we4q2334erw54tredr34tre}
If $G=(V,E)$ is a graph such that 
\begin{enumerate}[label={\rm(\arabic{*})},leftmargin=2em]
\item\label{fdg5rsredfr43fredsfrewdsfre} 
$G$ is neither a forest nor a circuit\quad , 
\item\label{vsfgert5y4wre45345ferw54wrq43er} 
every cycle in $G$ is a symmetric difference of Hamilton circuits\quad , 
\item\label{bgfdsrew54ye3rw3424g5534ftre4re} 
$G$ contains a vertex of degree $2$\quad , 
\end{enumerate}
then for every vertex $v$ as in \ref{bgfdsrew54ye3rw3424g5534ftre4re}, 
the graph $G-v$ obtained by deleting $v$ is bipartite. 
\end{lemma}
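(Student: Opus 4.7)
The plan is to exploit the degree-two vertex via a simple parity argument on edge counts. Let $v$ be any degree-two vertex and write $a,b$ for its two neighbours. Since $va$ and $vb$ are the only edges incident to $v$, \emph{every} Hamilton circuit of $G$ must contain both of them.

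Now I pick an arbitrary cycle $C\in\upZ_1(G-v;\F_2)$, viewed as an element of $\upZ_1(G;\F_2)$ disjoint from $\{va,vb\}$. Using hypothesis \ref{vsfgert5y4wre45345ferw54wrq43er}, I express $C=H_1+H_2+\cdots+H_r$ as a symmetric difference of pairwise distinct Hamilton circuits of $G$. Because each $H_i$ contributes both $va$ and $vb$ to the sum while $C$ contains neither, the number $r$ of summands must be even.

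A mod-$2$ edge count then finishes the argument: the parity of a symmetric difference of edge sets equals the sum of their parities, and since each Hamilton circuit has exactly $n$ edges this yields $\lvert C\rvert\equiv rn\equiv 0\pmod{2}$. Hence every element of $\upZ_1(G-v;\F_2)$ has even support, which is one of the standard equivalent reformulations of $G-v$ being bipartite.

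I do not foresee any real obstacle. Hypothesis \ref{fdg5rsredfr43fredsfrewdsfre} is not actually used in this line of argument and seems only to exclude degenerate situations (a forest has no Hamilton circuits at all, and for a circuit $G-v$ is a path, which is bipartite for trivial reasons). The entire substance of the proof is the single observation that every Hamilton circuit must traverse the path $a\text{--}v\text{--}b$, which forces $r$ to be even; everything else is a routine $\F_2$ calculation.
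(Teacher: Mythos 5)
Your proof is correct, and its core is the same parity argument that drives the paper's proof: every Hamilton circuit of $G$ must use both edges at the degree-$2$ vertex $v$, so a cycle of $G-v$ (which uses neither) can only be a symmetric difference of an \emph{even} number of Hamilton circuits; since the parity of a symmetric difference is the sum of the parities, such a cycle has even support, and hence $G-v$ is bipartite. The difference is in the scaffolding. You run the argument directly on the two edges $va$, $vb$, whereas the paper first extends $v$ along a maximal path of degree-$2$ vertices to a vertex $w$ of degree at least $3$, proves an ``all-or-none'' statement for the edges of that path, and invokes $2$-connectedness together with Menger's theorem to exhibit a circuit avoiding those edges. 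None of that is logically necessary: the existence of such a circuit is irrelevant to the conclusion (if $G-v$ has no circuit it is a forest and trivially bipartite), and the all-or-none property is only needed because the paper chose to work with the whole path rather than with the two edges at $v$. So your version is a genuine streamlining of the same idea. Your side remark is also accurate: hypothesis \ref{fdg5rsredfr43fredsfrewdsfre} is not needed for the conclusion --- for a forest, and for a circuit, $G-v$ is bipartite anyway --- and the paper uses it only to support its auxiliary constructions (a non-trivial cycle space forcing a Hamilton circuit, hence $2$-connectedness, and the existence of a vertex of degree $\geq 3$), all of which your argument bypasses.
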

\begin{proof}
Let $G$ be any such graph. Being a non-forest, the cycle space of $G$ is non-trivial. 
Since otherwise \ref{vsfgert5y4wre45345ferw54wrq43er} fails (for trivial reasons), 
we may assume that $G$ contains a Hamilton circuit, 
in particular, $G$ is $2$-connected. We now choose any $v\in V$ with $\deg(v)=2$. 
If all vertices of $G$ had degree $2$, then $G$ would be one single circuit, 
contradicting hypothesis \ref{csdfgry45tersdferw543ewr54edfs}. 
We may thus assume that there exists $w'\in V$ with $\deg(w')\geq 3$. 
By connectedness, there exists a $v$-$w'$-path $P$, and by finiteness there exists 
a vertex $w$ on this path such that $\deg(w)\geq 3$ and all vertices 
between $v$ and $w$ have degree $2$ in $G$. 
(Possibly, $w$ is a neighbour of $v$ and there are no vertices between $v$ and $w$ 
at all.) Let $v^-$ and $v^+$ denote the two neighbours of $v$, with $v^+$ the one 
in the direction of $w$ along $P$ (possibly, $v^+=w$). Then all vertices from $v$ 
up to and including the predecessor $w^-$ of $w$ on $P$ have degree $2$, hence 
\begin{equation}\label{adfgrw5te4sfaewafr4get65edre453erd}
\text{every circuit in $G$ either contains all or 
none of the edges $v^-v$, $vv^+$, $\dotsc$, $w^-w$}\quad . 
\end{equation}
Now consider some circuit $C$ in $G$ which does not contain 
any of the edges $v^-v$, $vv^+$, $\dotsc$, $w^-w$. Such circuits exist, 
since for example any two neighbours $w',w''$ of the $\geq 2$ neighbours of $w$ 
other than $w^-$ are connected by a path $\tilde{P}$ 
which neither contains $w$ (by $2$-connectedness of $G$ and Menger's theorem) 
nor any of the vertices $v$, $v^+$, $\dotsc$, $w^-$ (since these all have degree $2$), 
so the circuit $ww'\tilde{P}w''w$ is an example. 
By hypothesis \ref{vsfgert5y4wre45345ferw54wrq43er}, there exist Hamilton circuits 
$H_1,\dotsc,H_t$ of $G$ such that $C$ equals their symmetric difference. Each $H_i$ 
contains $v$, hence $vv^+$, hence in view of \eqref{adfgrw5te4sfaewafr4get65edre453erd} 
must contain all edges $v^-v$, $vv^+$, $\dotsc$, $w^-w$. 
Since $C$ itself does not contain any of these edges, $t$ is even. 
We have shown that every circuit $C$ in $G$ which does not contain 
any of the edges $v^-v$, $vv^+$, $\dotsc$, $w^-w$ is the symmetric difference 
of an even number of Hamilton circuits; since every such circuit has an 
even number of edges (no matter what the parity of $\lvert G\rvert$ is), 
and since every circuit in $G-v$ is a circuit not containing 
any of the edges $v^-v$, $vv^+$, $\dotsc$, $w^-w$, 
this proves that $G-v$ is bipartite. 
\end{proof}

\begin{definition}[{$\upK^{\widehat{s},s-1}$}]\label{dsegrw5t4ertsfdwe4t3wer54ewd}
For every $s\geq 3$ we define $\upK^{\widehat{s},s-1}$ to be the graph 
obtained from the complete bipartite graph with classes 
$\{1,3,5,\dotsc,2s-1\}$ and $\{2,4,\dotsc,2s-2\}$ by adding the vertex $0$ 
and the two edges $\{0,1\}$ and $\{0,2s-1\}$. 
\end{definition}

\begin{proposition}\label{dfgrw5ety65erw423454egrt4rtds}
If $G=(V,E)$ is a graph such that 
\begin{enumerate}[label={\rm(\arabic{*})},leftmargin=2em]
\item\label{csdfgry45tersdferw543ewr54edfs} 
$G$ is neither a forest nor a circuit\quad , 
\item\label{cxsdfgertyrsdfw43tgert4trder5red} 
every cycle in $G$ is a symmetric difference of Hamilton circuits\quad , 
\end{enumerate}
then it does \emph{not} follow that $G$ has minimum degree $\geq 3$. 
\end{proposition}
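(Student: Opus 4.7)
The plan is to exhibit, for any $s\geq 4$, the graph $\upK^{\widehat{s},s-1}$ from Definition~\ref{dsegrw5t4ertsfdwe4t3wer54ewd} as a counterexample. Its pendant vertex $0$ has degree $2$, the bipartite part $K_{s,s-1}$ contains $4$-cycles so the graph is no forest, and vertex $1$ has degree $s\geq 4$ so the graph is no circuit: hypothesis~\ref{csdfgry45tersdferw543ewr54edfs} holds while the minimum-degree conclusion fails. Only hypothesis~\ref{cxsdfgertyrsdfw43tgert4trder5red}, that every cycle is a symmetric difference of Hamilton circuits, requires work.

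Because $\deg(0)=2$, each Hamilton circuit of $\upK^{\widehat{s},s-1}$ must use both edges $\{0,1\}$ and $\{0,2s-1\}$, so Hamilton circuits correspond to Hamilton paths from~$1$ to~$2s-1$ in $K_{s,s-1}$, alternating sequences in which the evens run through a permutation of $\{2,4,\dots,2s-2\}$ and the interior odds through a permutation of $\{3,5,\dots,2s-3\}$. The two edges at~$0$ cancel in every pairwise symmetric difference of Hamilton circuits, so such differences lie in $\upZ_1(K_{s,s-1};\F_2)$, while any single Hamilton circuit represents the unique non-trivial coset of $\upZ_1(\upK^{\widehat{s},s-1};\F_2)/\upZ_1(K_{s,s-1};\F_2)\cong\F_2$. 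Since $K_{s,s-1}$ is complete bipartite, its cycle space is spanned by $4$-cycles, so to verify hypothesis~\ref{cxsdfgertyrsdfw43tgert4trder5red} it suffices to exhibit every $4$-cycle of $K_{s,s-1}$ as $H_0\triangle H_1$ for a suitable pair of Hamilton circuits.

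Given a $4$-cycle with edge set $\{ab,bc,cd,da\}$, $a,c$ odd and $b,d$ even, I will produce $H_0$ whose underlying $1$-to-$(2s-1)$ path traverses $b$ then $a$ as one consecutive pair and, later, $c$ then $d$ as another; then a $2$-opt reversal of the segment from $a$ to $c$ yields $H_1$, and the two edges introduced by the reversal are precisely $bc$ and $ad$, so that $H_0\triangle H_1=\{ab,cd,bc,ad\}$ is the target $4$-cycle. According to whether both, one, or none of $a,c$ lie in $\{1,2s-1\}$, I place the four special vertices at designated positions of the alternating path and fill the remaining positions with the leftover interior-odd and even vertices; a short vertex count shows the construction succeeds precisely when $s\geq 4$.

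The hard part is the bipartite parity constraint on the $2$-opt move: the naive idea of choosing a Hamilton circuit with cyclic order $(a,b,c,d)$ among the four special vertices and swapping $\{ab,cd\}$ for $\{ac,bd\}$ would introduce two same-part edges that do not even exist in $K_{s,s-1}$, so the cyclic order has to be $(b,a,c,d)$ (equivalently, $(a,b,d,c)$) to force the $2$-opt to add the $4$-cycle's \emph{other two} edges $bc$ and $ad$. Having enough room to realise this for every $4$-cycle is exactly what forces $s\geq 4$; indeed, for $s=3$ the graph $\upK^{\widehat{3},2}$ admits only two Hamilton circuits, whose $\F_2$-span has dimension $2$, whereas $\dim_{\F_2}\upZ_1(\upK^{\widehat{3},2};\F_2)=3$, so $s\geq 4$ is genuinely necessary for the present approach.
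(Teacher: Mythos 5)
Your proposal is correct, but it proves the proposition by a genuinely different and substantially more general argument than the paper does. The paper simply verifies the single instance $s=4$: it lists seven explicit Hamilton circuits of $\upK^{\widehat{4},3}$, notes $\dim_{\F_2}\upZ_1(\upK^{\widehat{4},3};\F_2)=14-8+1=7$, and checks that the incidence matrix \eqref{fdsgetytdsaer34t5ty46t4e5w43ewr3re4wew} of those circuits has $\F_2$-rank $7$; the verification is finite, explicit, and needs no structural insight beyond dimension counting. You instead argue structurally: the quotient $\upZ_1(\upK^{\widehat{s},s-1};\F_2)/\upZ_1(K_{s,s-1};\F_2)$ is one-dimensional with any Hamilton circuit representing the nontrivial class, the cycle space of the complete bipartite graph $K_{s,s-1}$ is generated by its $4$-cycles, and each $4$-cycle is realized as the symmetric difference of two Hamilton circuits by a parity-respecting $2$-opt exchange, whose admissible cyclic pattern $(a,b,\dots,d,c,\dots)$ you correctly identify (the naive pattern would require the nonexistent same-class edges $ac$ and $bd$). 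This costs more work than the paper's rank computation, but it buys something the paper explicitly does not have: uniformity in $s$. Written out in full, your argument proves Conjecture~\ref{dsfg45teft34gth4564gwe54gert4trdfx}, i.e. $\upZ_1(\upK^{\widehat{s},s-1};\F_2)=\langle\mathcal{H}(\upK^{\widehat{s},s-1})\rangle_{\F_2}$ for every $s\geq 4$, a statement the paper leaves open with support only from calculations at $s=4,5$. Two small points. First, vertex $0$ is not ``pendant''; it has degree $2$, which is what you need. Second, the case analysis when $a$ or $c$ lies in $\{1,2s-1\}$ is asserted rather than executed: there the required consecutive pairs flip orientation (for $a=1$ one needs a path of the shape $1,b,\dots,d,c,\dots,2s-1$, since the arc through $0$ must return from $c$ to $a$; for $a=1$, $c=2s-1$ the shape $1,b,\dots,d,2s-1$), and one must check the alternating fill-in of leftover vertices exists, which it does exactly when $s\geq 4$. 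These details are routine bookkeeping and do work out, so this is an omission, not a gap in the idea. For the proposition itself, the single case $s=4$ of your construction would of course suffice.
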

\begin{proof}
We prove that the graph $\upK^{\widehat{4},3}$ 
from Definition~\ref{dsegrw5t4ertsfdwe4t3wer54ewd} is an example for this 
non-implication. Evidenly, it is neither a forest nor a circuit and it 
does not have minimum degree $\geq 3$. So all we have to show is that 
\ref{cxsdfgertyrsdfw43tgert4trder5red} holds for $G=\upK^{\widehat{4},3}$. 

The cycle space of $G$ has dimension 
$\lVert\upK^{\widehat{4},3}\rVert - \lvert\upK^{\widehat{4},3}\rvert + 1$ $=$ 
$14$ $-$ $8$ $+$ $1$ $=$ $7$. Since a vector space does not contain proper 
subspaces of its own dimension, to prove \ref{cxsdfgertyrsdfw43tgert4trder5red} 
it suffices to exhibit seven Hamilton circuits linearly independent over $\F_2$: 
the circuits 
\begin{enumerate}[label={\rm(\arabic{*})}]
\item\label{dsfge5y4tdswer45ert6ert564erw} 
$C_1 := 0,1,4,5,2,3,6,7,0$\quad , 
\item\label{sadfg5tdfgy45erw43wgferw432ew}
$C_2 := 0,1,6,3,4,5,2,7,0$\quad ,
\item\label{fdsget4trdgwe43gdftdsxzcsdfert654re}
$C_3 := 0,1,4,3,2,5,6,7,0$\quad ,
\item\label{fdgst54ew354ewe43rwe54e54ew43ew}
$C_4 := 0,1,2,5,4,3,6,7,0$\quad , 
\item\label{fdgsr54w435g4tytewt4fgd4te4trds43}
$C_5 := 0,1,6,5,2,3,4,7,0$\quad ,
\item\label{fdgsr5ty4rtdf4ger4tewr34fr4345er54t}
$C_6 := 0,1,2,3,6,5,4,7,0$\quad , 
\item\label{fdwer54tefwr3erw5wer43w}
$C_7 := 0,1,2,5,6,3,4,7,0$\quad , 
\end{enumerate}
are Hamilton circuits of $\upK^{\widehat{4},3}$, the matrix of $C_1,\dotsc,C_7$ 
w.r.t. the standard basis of the edge-space of $\upK^{\widehat{4},3}$ is 
\begin{equation}\label{fdsgetytdsaer34t5ty46t4e5w43ewr3re4wew}
\begin{smallmatrix}
& & C_1 & C_2 & C_3 & C_4 & C_5 & C_6 & C_7 \\
& & & & & & & & \\
0,1  &  & 1 & 1 & 1 & 1 & 1 & 1 & 1 \\
0,7  &  & 1 & 1 & 1 & 1 & 1 & 1 & 1 \\
1,2  &  & 0 & 0 & 0 & 1 & 0 & 1 & 1 \\
1,4  &  & 1 & 0 & 1 & 0 & 0 & 0 & 0 \\
1,6  &  & 0 & 1 & 0 & 0 & 1 & 0 & 0 \\
2,3  &  & 1 & 0 & 1 & 0 & 1 & 1 & 0 \\
2,5  &  & 1 & 1 & 1 & 1 & 1 & 0 & 1 \\
2,7  &  & 0 & 1 & 0 & 0 & 0 & 0 & 0 \\
3,4  &  & 0 & 1 & 1 & 1 & 1 & 0 & 1 \\
3,6  &  & 1 & 1 & 0 & 1 & 0 & 1 & 1 \\
4,5  &  & 1 & 1 & 0 & 1 & 0 & 1 & 0 \\
4,7  &  & 0 & 0 & 0 & 0 & 1 & 1 & 1 \\
5,6  &  & 0 & 0 & 1 & 0 & 1 & 1 & 1 \\
6,7  &  & 1 & 0 & 1 & 1 & 0 & 0 & 0 \\
\end{smallmatrix}
\end{equation}
and this matrix indeed has $\F_2$-rank $7$. This completes the proof 
of Proposition~\ref{dfgrw5ety65erw423454egrt4rtds}. 
\end{proof}

\begin{figure} 
\begin{center}
\includegraphics{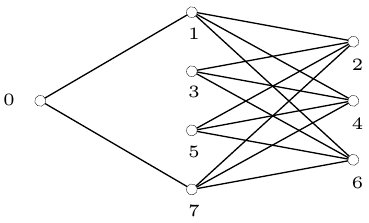}
\caption{The graph $\upK^{\widehat{4},3}$, a small example 
that being neither a forest nor a circuit and having every cycle 
a symmetric difference of Hamilton circuits does not \emph{always} 
imply minimum degree $\geq 3$. This implication holds \emph{a.a.s.} 
in $G_{n,p}$, though, see Lemma~\ref{fdsgrwtytrhdfs4w3w54ert65w43re}.}
\label{fdgw45tdwert54ter5y4terw5rwerre}
\end{center}
\end{figure}

For $s=3$, the graph $\upK^{\widehat{3},2}$ is not an example for the 
non-implication in Proposition~\ref{dfgrw5ety65erw423454egrt4rtds}; 
but for every $s\geq 4$ it seems to be: while we do not need this here, 
let us note in passing that in view of the example $\upK^{\widehat{4},3}$ 
used for proving Proposition~\ref{dfgrw5ety65erw423454egrt4rtds}, 
and also in view of some calculations for the case $s=5$, 
it seems that for \emph{every} $s\geq 4$ the graph $\upK^{\widehat{s},s-1}$ 
has every cycle a symmetric difference of Hamilton circuits, 
despite its degree-$2$-vertex. I.e., it seems that the $\upK^{\widehat{s},s-1}$ 
for $s\geq 4$ are an infinite set of examples for the non-implication 
in Proposition~\ref{dfgrw5ety65erw423454egrt4rtds}: 

\begin{conjecture}\label{dsfg45teft34gth4564gwe54gert4trdfx}
$\upZ_1(\upK^{\widehat{s},s-1};\F_2) 
= \langle\mathcal{H}(\upK^{\widehat{s},s-1})\rangle_{\F_2}$ for every $s\geq 4$. 
\end{conjecture}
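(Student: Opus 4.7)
The plan is to exploit that $0$ has degree $2$: every $z\in\upZ_1(\upK^{\widehat{s},s-1};\F_2)$ contains either both or neither of the edges $e_0=\{0,1\}$ and $e_0'=\{0,2s-1\}$, so the cycle space splits as the $\F_2$-direct sum of $\upZ_1(K_{s,s-1};\F_2)$ (the cycles avoiding $0$) and a one-dimensional complement hit by any Hamilton circuit. Every Hamilton circuit of $\upK^{\widehat{s},s-1}$ has the form $\{e_0,e_0'\}\cup P$ with $P$ a Hamilton $1$-$(2s-1)$-path in $K_{s,s-1}$, and the sum of any two Hamilton circuits cancels $e_0+e_0'$ and equals $P+P'$. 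Since Hamilton circuits exist, Conjecture~\ref{dsfg45teft34gth4564gwe54gert4trdfx} reduces to showing that the $\F_2$-span of $\{P+P'\colon P,P'\text{ Hamilton }1\text{-}(2s-1)\text{-paths in }K_{s,s-1}\}$ equals $\upZ_1(K_{s,s-1};\F_2)$. As the cycle space of a bipartite graph is generated by its $4$-cycles, it in fact suffices to realise each $4$-cycle on $\{a,a'\}\subseteq A:=\{1,3,\dotsc,2s-1\}$ and $\{b,b'\}\subseteq B:=\{2,4,\dotsc,2s-2\}$ as a single symmetric difference $P+P'$.

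I plan to do this by a bipartite ``$2$-opt'' swap on a tailor-made template path. If $P=v_0v_1\cdots v_{2s-2}$ is a Hamilton $1$-$(2s-1)$-path and $0\le i<j-1\le 2s-3$ with $j-i$ odd, then the ``flip'' $P':=v_0\cdots v_iv_jv_{j-1}\cdots v_{i+1}v_{j+1}\cdots v_{2s-2}$ is again a Hamilton $1$-$(2s-1)$-path (the new edges $v_iv_j,v_{i+1}v_{j+1}$ do join $A$ to $B$ precisely because $j-i$ is odd), and $P+P'=\{v_iv_{i+1},v_jv_{j+1},v_iv_j,v_{i+1}v_{j+1}\}$ is the $4$-cycle with vertex set $\{v_i,v_{i+1},v_j,v_{j+1}\}$. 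Given a target $4$-cycle on $\{a,a'\}\cup\{b,b'\}$, it thus remains to position $a,b,b',a'$ in $P$ so that this flip returns exactly the required four edges. I expect three cases, indexed by $\lvert\{a,a'\}\cap\{1,2s-1\}\rvert\in\{0,1,2\}$: if $0$, I would set $(v_1,v_2,v_3,v_4,v_5)=(b,a,b'',a',b')$ for any $b''\in B\setminus\{b,b'\}$ and flip at $(i,j)=(1,4)$; if $1$, say $a=1$, I would set $v_1=b$, $v_{2m-1}=b'$, $v_{2m}=a'$ for some $2\le m\le s-2$ and flip at $(0,2m-1)$; if $2$, I would set $v_1=b$, $v_{2s-3}=b'$ and flip at $(0,2s-3)$. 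In each case the remaining positions of $P$ are filled by any $A/B$-alternating assignment of the leftover vertices, which exists because $K_{s,s-1}$ is complete bipartite.

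The main obstacle is the first case: accommodating both $a,a'$ together with the spacer $b''\in B\setminus\{b,b'\}$ in the interior of the path forces $\lvert A\rvert\ge 4$ and $\lvert B\rvert\ge 3$, i.e.\ $s\ge 4$, which is exactly the hypothesis of the conjecture. This constraint is genuinely tight: when $s=3$, only the third case applies and yields only the unique $4$-cycle $1$-$b$-$5$-$b'$-$1$ of $K_{3,2}$, so the remaining two $4$-cycles of $K_{3,2}$ (those involving the interior $A$-vertex $3$) are unreachable, in agreement with the fact recorded in the paragraph after Proposition~\ref{dfgrw5ety65erw423454egrt4rtds} that $\upK^{\widehat{3},2}$ is not an example. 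Once the three cases are verified the conjecture follows for every $s\ge 4$.
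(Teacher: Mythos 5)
You were asked to prove a statement that the paper itself does not prove: it is stated as Conjecture~\ref{dsfg45teft34gth4564gwe54gert4trdfx}, and the paper's only evidence is the single case $s=4$, which Proposition~\ref{dfgrw5ety65erw423454egrt4rtds} settles by brute force --- seven explicit Hamilton circuits of $\upK^{\widehat{4},3}$ whose matrix \eqref{fdsgetytdsaer34t5ty46t4e5w43ewr3re4wew} is checked to have $\F_2$-rank $7=\dim_{\F_2}\upZ_1(\upK^{\widehat{4},3};\F_2)$. Your route is genuinely different and, as far as I can verify, sound for all $s\ge 4$: the splitting of $\upZ_1(\upK^{\widehat{s},s-1};\F_2)$ across the degree-$2$ vertex $0$ is correct (dimension $s^2-3s+3$ against $(s-1)(s-2)$ for $K_{s,s-1}$, a difference of exactly one, accounted for by any Hamilton circuit), the reduction to generating $\upZ_1(K_{s,s-1};\F_2)$ by differences $P+P'$ of Hamilton paths from $1$ to $2s-1$ is correct, the parity condition $j-i$ odd is exactly what keeps the $2$-opt flip inside the bipartite graph, and your three templates satisfy all side constraints ($i<j-1$, $j\le 2s-3$, enough leftover vertices to complete the alternating path), with $s\ge 4$ entering precisely where you say. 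Where the paper's rank computation buys a short certificate for one graph, your argument buys uniformity in $s$ and a structural explanation of why $s=3$ is exceptional; written out, it would subsume both Proposition~\ref{dfgrw5ety65erw423454egrt4rtds} and the conjecture. Two small repairs are needed, neither affecting the substance. First, ``the cycle space of a bipartite graph is generated by its $4$-cycles'' is false in general (a circuit of length six is bipartite and contains no $4$-cycle); what you need, and what is true, is the statement for the \emph{complete} bipartite graph $K_{s,s-1}$, where the $(s-1)(s-2)$ four-cycles through a fixed pair $a_1\in A$, $b_1\in B$ form a basis. Second, in the case $\lvert\{a,a'\}\cap\{1,2s-1\}\rvert=1$ you treat only $a=1$; either invoke the automorphism of $\upK^{\widehat{s},s-1}$ swapping $1$ and $2s-1$ and fixing all other vertices, or write the mirrored template with the flip anchored at $v_{2s-2}$.
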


We will now derive a probabilistic analogue of 
Proposition~\ref{dfgrw5ety65erw423454egrt4rtds}. We will use: 

\begin{lemma}[{\cite[Exercise~3.2]{MR1864966}}]
\label{dfsgr5etgrsfdwert54egrt4trd}
For any $k\geq 0$ and any $\omega\in\R_{>0}^{\N}$ 
with $\omega_n\xrightarrow[]{n\to\infty}\infty$, 
\begin{enumerate}[label={\rm(\arabic{*})}]
\item\label{fdxcsfgert6hr5trf43wger54ter354ed}
if $p(n)\geq\tfrac{\log n + k\log\log n + \omega_n}{n}$, then 
$\Prob_{G_{n,p}}[\delta\geq k+1]\xrightarrow[]{n\to\infty} 1$ \quad , 
\item\label{cxvsfg5re4erwfdgst54etrdfs5ew}
if $p(n) = \tfrac{\log n + k \log\log n + c + h(n)}{n}$ 
with some constant $c\in\R$ and some function $h\in o(1)$, \\
then 
$\Prob_{G_{n,p}}[\delta = k]\xrightarrow[]{n\to\infty} 1-\exp(-\exp(-c/k!))$ \\
and 
$\Prob_{G_{n,p}}[\delta = k+1]\xrightarrow[]{n\to\infty} \exp(-\exp(-c/k!))$\quad , 
\item\label{fdgserwe45wre543er54ewrsfsdgwer5efds}
if $p(n)\leq\tfrac{\log n + k\log\log n - \omega_n}{n}$, then 
$\Prob_{G_{n,p}}[\delta\leq k]\xrightarrow[]{n\to\infty} 1$ \quad .
\end{enumerate}
\end{lemma}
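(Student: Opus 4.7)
The plan is to treat this as a standard application of the first- and second-moment methods, tracking vertices of low degree in $G_{n,p}$. Let $X_j = X_j(G_{n,p})$ denote the number of vertices of degree exactly $j$. Since each vertex-degree is distributed as $\mathrm{Bin}(n-1,p)$, the key input is the first-moment formula
\begin{equation*}
\Ex[X_j] \;=\; n\binom{n-1}{j}p^j(1-p)^{n-1-j}\;.
\end{equation*}
Inserting $p(n) = (\log n + k\log\log n + c(n))/n$ with $c(n)$ bounded or slowly varying and applying $\binom{n-1}{j}\sim n^j/j!$ together with $(1-p)^{n-1-j} = \exp(-(n-1-j)p)(1+o(1))$ leads, after routine simplification, to $\Ex[X_k]\to e^{-c}/k!$ in regime \ref{cxvsfg5re4erwfdgst54etrdfs5ew}, to $\Ex[\sum_{j\le k}X_j]\to 0$ in regime \ref{fdxcsfgert6hr5trf43wger54ter354ed} (because the dominant term is $\Ex[X_k]$ and the extra $+\omega_n/n$ makes it vanish), and to $\Ex[X_k]\to\infty$ in regime \ref{fdgserwe45wre543er54ewrsfsdgwer5efds}.

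Part \ref{fdxcsfgert6hr5trf43wger54ter354ed} then follows directly from Markov's inequality applied to $X_0+X_1+\cdots+X_k$, since $\delta\ge k+1$ is the complement of that sum being positive. For part \ref{fdgserwe45wre543er54ewrsfsdgwer5efds} I would invoke the second moment method: compute $\Ex[X_k^2]$ by summing over ordered pairs $(u,v)$, split according to whether $uv$ is an edge, and check that $\Ex[X_k^2]/\Ex[X_k]^2\to 1$; Chebyshev then gives $X_k\ge1$ a.a.s., hence $\delta\le k$ a.a.s.

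The heart of the lemma is \ref{cxvsfg5re4erwfdgst54etrdfs5ew}, whose Poisson limits are proved most cleanly by the method of factorial moments. Setting $\lambda := e^{-c}/k!$, the goal is
\begin{equation*}
\Ex\!\left[(X_k)_r\right] \;=\; \sum_{\{v_1,\ldots,v_r\}\subseteq V} \Prob[\deg(v_1)=\cdots=\deg(v_r)=k] \;\longrightarrow\; \lambda^r
\end{equation*}
for every fixed $r\ge1$, which forces $X_k$ to converge in distribution to $\mathrm{Poisson}(\lambda)$. The calculation factors as $n^r/r!$ (the number of $r$-subsets) times the joint probability that each of $r$ chosen vertices has degree exactly $k$. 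The shared potential edges among $\{v_1,\ldots,v_r\}$ contribute a factor of $(1-p)^{\binom{r}{2}}p^0=1+o(1)$ (there is no room for any of these edges to be present, because that would push some degree above $k$, but the correction is absorbed), while the remaining degree conditions decouple up to a $(1+o(1))$ factor because $rp\to 0$. This yields $\Ex[(X_k)_r]\to \lambda^r$ as required. In particular $\Prob[X_k=0]\to e^{-\lambda}$, and combining with part \ref{fdxcsfgert6hr5trf43wger54ter354ed} applied at level $k-1$ (which shows $X_0=\cdots=X_{k-1}=0$ a.a.s., since $c(n)=c+h(n)$ makes the weaker hypothesis of \ref{fdxcsfgert6hr5trf43wger54ter354ed} hold at level $k-1$) gives the two displayed limits in \ref{cxvsfg5re4erwfdgst54etrdfs5ew}.

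The main technical obstacle is the bookkeeping in the factorial-moment computation: one must show that the dependence introduced by the $\binom{r}{2}$ shared non-edges, together with the requirement that none of the remaining $n-r$ vertices sends more than $k$ edges to any prescribed $v_i$, only contributes a multiplicative $1+o(1)$. This is a standard but slightly delicate asymptotic estimate; after it is done, everything else reduces to the first-moment expansion and to substituting $p$ into $\exp(-(n-1)p)\cdot (np)^k/k!$ and simplifying, which cleanly produces the factor $e^{-c}/k!$ on the nose.
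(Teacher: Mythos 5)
The paper never proves this lemma---it is imported wholesale, with citation, from Bollob\'as's book (Exercise~3.2 of \cite{MR1864966})---so there is no paper proof to compare against; your first-/second-moment plus Poisson-convergence scheme is exactly the standard textbook route, and your Poisson mean $\lambda=e^{-c}/k!$ is the classically correct constant (the paper's printed $\exp(-\exp(-c/k!))$ appears to be a typo for $\exp(-e^{-c}/k!)$; the two agree only for $k\le 1$). However, your write-up has two genuine gaps. First, parts \ref{fdxcsfgert6hr5trf43wger54ter354ed} and \ref{fdgserwe45wre543er54ewrsfsdgwer5efds} hypothesize \emph{inequalities} on $p$, while your moment computations silently assume $p$ sits exactly at the boundary value. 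For \ref{fdxcsfgert6hr5trf43wger54ter354ed} this is repaired by noting that $\{\delta\ge k+1\}$ is a monotone increasing property and coupling $G_{n,p}$ above $G_{n,p_0}$ with $p_0$ the (suitably truncated) boundary value. For \ref{fdgserwe45wre543er54ewrsfsdgwer5efds} the omission breaks the argument as written: take $k=2$ and $p(n)=n^{-2}$, which satisfies the hypothesis of \ref{fdgserwe45wre543er54ewrsfsdgwer5efds}; then $\Ex[X_2]=n\binom{n-1}{2}p^2(1-p)^{n-3}=O(n^{-1})\to 0$, so a.a.s. $X_2=0$ and no second-moment argument on $X_k$ can produce a degree-$k$ vertex. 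The conclusion $\delta\le 2$ is still true (the graph is a.a.s. nearly empty), but not by your proof. You must first use that $\{\delta\le k\}$ is monotone \emph{decreasing} to reduce to $p$ equal to the boundary value, with $\omega_n$ replaced by $\min(\omega_n,\log\log n)$ so that $np=\log n+O(\log\log n)$ there, and only then run the second moment.

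Second, in part \ref{cxvsfg5re4erwfdgst54etrdfs5ew} your Poisson limit combined with part \ref{fdxcsfgert6hr5trf43wger54ter354ed} at level $k-1$ yields $\Prob[X_0=\cdots=X_k=0]\to e^{-\lambda}$, i.e. $\Prob[\delta\ge k+1]\to e^{-\lambda}$, whereas the lemma asserts $\Prob[\delta= k+1]\to e^{-\lambda}$. The missing ingredient is that a.a.s. $\delta\le k+1$, equivalently $X_{k+1}\ge 1$ a.a.s.; this follows from your own part \ref{fdgserwe45wre543er54ewrsfsdgwer5efds} applied with $k+1$ in place of $k$ (its hypothesis holds with $\omega_n=\log\log n-c-h(n)\to\infty$), but it has to be invoked---``part \ref{fdxcsfgert6hr5trf43wger54ter354ed} at level $k-1$'' alone cannot deliver the second displayed limit. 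Finally, a notational slip worth fixing: the sum over $r$-subsets equals the binomial moment $\Ex\bigl[\tbinom{X_k}{r}\bigr]$, not the falling factorial $\Ex[(X_k)_r]$; with your count $n^r/r!$ times joint probability $\sim(\lambda/n)^r$ you get $\lambda^r/r!$, which still implies the Poisson limit, but the bookkeeping should be made consistent.
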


If $G$ is a forest, the property $\upZ_1(G;\F_2) = \langle\mathcal{H}(G)\rangle_{\F_2}$ 
vacuously holds, and then no conclusions can be drawn from it. This is the reason 
for \ref{dfsgrtrtdfg5egrt45ytrdre5e} of the following lemma. 
As to the second hypothesis in \ref{dfsgrtrtdfg5egrt45ytrdre5e}, of course, 
for any $p$ a.a.s. $G_{n,p}$ is not a circuit, so we could just leave out 
`nor a circuit'; but we leave it in for better analogy with the deterministic 
Proposition~\ref{dfgrw5ety65erw423454egrt4rtds}:

\begin{lemma}[{the non-implication from Proposition~\ref{dfgrw5ety65erw423454egrt4rtds} 
holds for $G_{n,p}$}]\label{fdsgrwtytrhdfs4w3w54ert65w43re}
If $p\in[0,1]^{\N}$ is such that with $G\sim G_{n,p}$ a.a.s. for odd $n$ 
\begin{enumerate}[label={\rm(\arabic{*})}]
\item\label{dfsgrtrtdfg5egrt45ytrdre5e} 
$G$ is neither a forest nor a circuit\quad , 
\item\label{sdfgt5rtdsge4swre5eswre} 
every cycle in $G$ is a symmetric difference of Hamilton circuits \quad , 
\end{enumerate}
then a.a.s. $G$ has minimum degree $\geq$ $3$. 
\end{lemma}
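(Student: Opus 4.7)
The plan is to combine the deterministic structural obstruction of Lemma~\ref{dsfgty36yetdfs4r34we4q2334erw54tredr34tre} with a straightforward first-moment calculation. Let $A_n$ denote the event that $G\sim G_{n,p}$ satisfies both \ref{dfsgrtrtdfg5egrt45ytrdre5e} and \ref{sdfgt5rtdsge4swre5eswre}. The hypothesis reads $\Prob[A_n]\to 1$ along odd $n$, and since $\Prob[\delta(G)\leq 2]\leq\Prob[A_n\cap\{\delta\leq 2\}]+\Prob[A_n^{\mathrm{c}}]$, it suffices to show that the first summand tends to zero.

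First, I knock out $\delta\leq 1$ on $A_n$: on that event $G$ is non-forest, so $\upZ_1(G;\F_2)\neq 0$, and then \ref{sdfgt5rtdsge4swre5eswre} forces $\mathcal{H}(G)\neq\emptyset$; in particular $G$ is Hamiltonian and so $\delta(G)\geq 2$. Hence $A_n\cap\{\delta\leq 2\}=A_n\cap\{\delta=2\}$, and on this event all three hypotheses of Lemma~\ref{dsfgty36yetdfs4r34we4q2334erw54tredr34tre} are in force, so $G-v$ is bipartite for \emph{every} vertex $v$ of degree $2$ in $G$. Writing $B_v:=\{\deg_G(v)=2\}\cap\{G-v\text{ bipartite}\}$, the problem reduces to the probabilistic statement $\Prob[\bigcup_{v\in V} B_v]\to 0$.

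The key observation for the first-moment bound is independence: $\{\deg_G(v)=2\}$ is determined by the $n-1$ Bernoullis describing edges at $v$, while $\{G-v\text{ bipartite}\}$ is determined by the remaining $\binom{n-1}{2}$ Bernoullis, so $\Prob[B_v]=\Prob[\deg_G(v)=2]\cdot\Prob[G_{n-1,p}\text{ bipartite}]$. A union bound over the at most $2^{n-1}$ bipartitions of $V\setminus\{v\}$ yields $\Prob[G_{n-1,p}\text{ bipartite}]\leq 2^{n-1}(1-p)^{\lfloor (n-1)(n-3)/4\rfloor}$, and combining this with symmetry and a union bound over $v$,
\[
\Prob\Bigl[\bigcup_{v\in V} B_v\Bigr]\;\leq\;n\binom{n-1}{2}p^2(1-p)^{n-3}\cdot 2^{n-1}(1-p)^{\lfloor (n-1)(n-3)/4\rfloor}.
\]

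To show this is $o(1)$ I extract a usable lower bound on $p$ from the hypothesis. Because $\Prob[A_n]\to 1$ forces $\Prob[\delta\geq 2]\to 1$, the contrapositive of Lemma~\ref{dsfgr5etgrsfdwert54egrt4trd}\ref{fdgserwe45wre543er54ewrsfsdgwer5efds} (applied with $k=1$, along subsequences if necessary) gives $p(n)\,n\geq\log n+\log\log n-O(1)$ for all large $n$. Then the dominant factor $(1-p)^{\lfloor (n-1)(n-3)/4\rfloor}$ is at most $\exp(-(n\log n)/4+o(n\log n))$, which beats the $2^{n-1}=\exp(n\log 2)$ factor by a margin of $\exp(-\Omega(n\log n))$, while everything else grows only polynomially in $n$; hence the displayed quantity is $o(1)$. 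The main substantive step is the independence factorization $\Prob[B_v]=\Prob[\deg(v)=2]\cdot\Prob[G_{n-1,p}\text{ bipartite}]$; after that, the only purpose of the $p$-lower-bound is to make the bipartite bound dominate $2^n$, and since this margin is enormous, the argument is insensitive to how the lemma's hypothesis is used to pin down $p$.
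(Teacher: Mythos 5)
Your proof is correct, but it takes a genuinely different route from the paper's at the probabilistic core. Both arguments share the same skeleton up to a point: rule out $\delta\leq 1$ because hypothesis \ref{sdfgt5rtdsge4swre5eswre} plus non-forest forces a Hamilton circuit, then invoke the deterministic Lemma~\ref{dsfgty36yetdfs4r34we4q2334erw54tredr34tre} to conclude that on the bad event some degree-$2$ vertex $v$ has $G-v$ bipartite, and finally extract a lower bound on $p$ from Lemma~\ref{dfsgr5etgrsfdwert54egrt4trd}.\ref{fdgserwe45wre543er54ewrsfsdgwer5efds}. After that the two proofs diverge. The paper argues by contradiction, uses Bolzano--Weierstrass to pass to a subsequence along which $\Prob[\delta=2]$ converges to a positive limit, and then kills the bipartiteness event by showing that a.a.s. \emph{every} vertex-deleted subgraph $G-v$ contains a triangle (citing the triangle threshold $p\gg 1/n$), so that bipartiteness of any $G-v$ is a.a.s. impossible. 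You instead give a direct estimate: $\Prob[\delta\leq 2]\leq\Prob[A_n^{\mathrm{c}}]+\Prob[\bigcup_v B_v]$, and bound $\Prob[\bigcup_v B_v]$ by a first-moment computation resting on the independence of $\{\deg_G(v)=2\}$ (edges at $v$) from $\{G-v\text{ bipartite}\}$ (the remaining edges), together with a union bound over the at most $2^{n-1}$ bipartitions. Your route buys three things: it avoids the contradiction/subsequence scaffolding entirely; it is quantitative, with an error bound of order $\exp(-\Omega(n\log n))$; and it handles the uniformity over all $n$ choices of $v$ automatically, whereas the paper's step from ``$G_{n-1,p}$ a.a.s. contains a triangle'' to ``a.a.s. $G-v$ contains a triangle \emph{for every} $v$'' implicitly needs the triangle-freeness probability to be $o(1/n)$, which is true but not literally what the cited result states. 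The paper's route, in exchange, requires no computation beyond citing standard facts. One small caveat on your write-up: your claim $p(n)\,n\geq\log n+\log\log n-O(1)$ requires choosing the function $\omega$ in Lemma~\ref{dfsgr5etgrsfdwert54egrt4trd}.\ref{fdgserwe45wre543er54ewrsfsdgwer5efds} adaptively against a hypothetical bad subsequence (plus a monotone-coupling step, since the lemma's hypothesis is about the whole sequence); but as you yourself note, any crude bound such as $p(n)\geq\tfrac{\log n}{2n}$ eventually --- which follows exactly as in the paper with a fixed $\omega_n=\log\log\log n$ --- is ample for the exponential margin, so this does not affect correctness.
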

\begin{proof}
Suppose that we do \emph{not} have 
\begin{equation}\label{fdsgretyrtedsfgwer545r5wer}
\Prob_{G_{n,p}}[\delta(G)\geq 3]\xrightarrow[]{\mathrm{odd}\ n\to\infty} 1 \quad . 
\end{equation} 
Then by the Bolzano--Weierstrass-theorem there exists $0<\xi<1$ and a 
subsequence $(n_i)_{i\in\N}$ of odd numbers such that 
\begin{equation}\label{dcsfger45gerw54erth64534243rw54rt}
\Prob_{G_{n_i,p}}[\{ G\subseteq\upK^{n_i}\colon \delta(G)\geq 3\}]
\xrightarrow[]{i\to\infty} \xi \quad  , 
\end{equation}
equivalently 
\begin{equation}\label{asdf4rr2w54ser454w3}
\Prob_{G_{n_i,p}}[\{ G\subseteq\upK^{n_i}\colon \delta(G)\leq 2 \}]
\xrightarrow[]{i\to\infty} 1-\xi \in (0,1) \quad  . 
\end{equation}
Since with our $p$, a.a.s. $G$ contains some circuit and has 
property \ref{sdfgt5rtdsge4swre5eswre}, in particular we a.a.s. have at least 
one Hamilton circuit in $G$, hence in particular we a.a.s. have $\delta(G)\geq 2$, 
i.e., so 
\begin{equation}\label{fdsgretyhtytrw54etrhe56erw5re}
\Prob_{G_{n_i,p}}[\{ G\subseteq\upK^{n_i}\colon \delta(G)\geq 2 \}]
\xrightarrow[]{i\to\infty} 1 \quad  . 
\end{equation}
Twice using the fact that intersecting with an 
a.a.s. property does not change an asymptotic probability, 
\eqref{asdf4rr2w54ser454w3} and \eqref{fdsgretyhtytrw54etrhe56erw5re} 
together imply 
\begin{equation}\label{dfsgr5egersadfsgrht65etgwertw54rtds}
\Prob_{G_{n_i,p}}[\{ G\subseteq\upK^{n_i}\colon \delta(G)= 2 \}]
\xrightarrow[]{i\to\infty} 1-\xi \in (0,1) \quad  , 
\end{equation}
and now \eqref{dfsgr5egersadfsgrht65etgwertw54rtds} 
together with the a.a.s. property 
$\upZ_1(G;\F_2) = \langle\mathcal{H}(G)\rangle_{\F_2}$ 
from hypothesis \ref{sdfgt5rtdsge4swre5eswre} implies 
\begin{equation}\label{dfsgrw5e34yw34345wre}
\Prob_{G_{n_i,p}}[\{ G\subseteq\upK^{n_i}\colon \delta(G)= 2
\quad\mathrm{and}\quad 
\upZ_1(G;\F_2) = \langle\mathcal{H}(G)\rangle_{\F_2}\}]
\xrightarrow[]{i\to\infty} 1-\xi \in (0,1) \quad . 
\end{equation}
All in all we now know that with our $p$, 
\begin{equation}\label{vcsbfgret45serdfasgrt54tsg5t4rtdfsxzzsderw3}
\Prob_{G_{n_i,p}}\left[\left\{ G\subseteq\upK^{n_i}\colon 
\parbox{0.45\linewidth}{$G$ is neither a forest nor a circuit\\
and every cycle in $G$ is a symmetric difference of Hamilton circuits\\ 
and $G$ contains a vertex of degree $2$}\right\}\right]
\xrightarrow[]{i\to\infty} 1-\xi \in (0,1) \quad . 
\end{equation}

From Lemma~\ref{dsfgty36yetdfs4r34we4q2334erw54tredr34tre} 
we know the deterministic implication that, for any $n\in\N$, 
\begin{align}\label{dfsgretgrsersdae4wsfgt6yet54rt65t}
& \left\{ G\subseteq\upK^n\colon 
\parbox{0.45\linewidth}{$G$ is neither a forest nor a circuit\\
and every cycle in $G$ is a symmetric difference of Hamilton circuits\\ 
and $G$ contains a vertex of degree $2$}\right\} \notag \\
\subseteq & \left\{ G\subseteq\upK^n\colon 
\parbox{0.4\linewidth}{for every vertex $v$ of $G$ with $\deg(v)=2$ the graph $G-v$ 
is bipartite}\right\}\quad .
\end{align}
We abbreviate 
\begin{enumerate}[label={\rm(\arabic{*})}]
\item\label{dsafsrt54trsdfgwehty65445g54trs54} 
$\mathcal{B}_{G-v,n}$ $:=$ $\{$ $G\subseteq\upK^n\colon$ 
for every $v\in\upV(G)$ with $\deg(v)=2$ the graph $G-v$ is bipartite $\}$,
\item\label{sdfgtrsdfadreswfredsfed} 
$\mathcal{E}_{\delta=2,n}$ $:=$ $\{$ $G\subseteq\upK^n\colon$ 
there exists in $G$ a vertex $v$ with $\deg(v)=2$ $\}$.
\end{enumerate}

Applying $\Prob_{G_{n_i,p}}$ to \eqref{dfsgretgrsersdae4wsfgt6yet54rt65t}, 
taking $\limsup$ on both sides and using 
\eqref{vcsbfgret45serdfasgrt54tsg5t4rtdfsxzzsderw3}, it follows that 
\begin{equation}\label{vcsvdfg56egetry7645h453er54er}
0 < 1-\xi \leq \limsup_{i\to\infty} 
\Prob_{G_{n_i,p}}\left[ \mathcal{B}_{G-v,n_i} \right]\quad .
\end{equation}
We now claim that from what we know about $p$, it follows that 
\begin{equation}\label{dsfg5ert65derswadferwg54sresd}
\limsup_{i\to\infty} \Prob_{G_{n_i,p}}\left[ \mathcal{B}_{G-v,n_i}\right] = 0 \quad , 
\end{equation}
contradicting \eqref{vcsvdfg56egetry7645h453er54er} and completing the proof 
of Lemma~\ref{fdsgrwtytrhdfs4w3w54ert65w43re}. 

To prove \eqref{dsfg5ert65derswadferwg54sresd}, we first note that 
\eqref{dfsgr5egersadfsgrht65etgwertw54rtds} can also be written 
\begin{equation}\label{adfsgr5serdfr4wre54dwrdf}
\Prob_{G_{n_i,p}}[\mathcal{E}_{\delta=2,n_i}]
\xrightarrow[]{i\to\infty} 1-\xi > 0 \quad ,  
\end{equation}
so we may condition on the event $\mathcal{E}_{\delta=2,n_i}$ and write  
\begin{equation}\label{zdsfgrweytrehdfwt34y653453ew}
\Prob_{G_{n_i,p}}\left[\mathcal{B}_{G-v,n}\right] = 
\Prob_{G_{n_i,p}}\left[\mathcal{B}_{G-v,n}\mid \mathcal{E}_{\delta=2,n}\right]
\cdot \Prob_{G_{n_i,p}}\left[\mathcal{E}_{\delta=2,n_i}\right] \quad . 
\end{equation}
We now claim that from what we know about $p$, 
\begin{equation}\label{fgdret5te42r345t645er54w54wre}
\limsup_{i\to\infty}\Prob_{G_{n_i,p}}\left[
\mathcal{B}_{G-v,n_i}\mid\mathcal{E}_{\delta=2,n_i}\right] =  0 \quad . 
\end{equation}
To prove \eqref{fgdret5te42r345t645er54w54wre}, we prove 
$\limsup_{i\to\infty}$ 
$\Prob_{G_{n_i,p}}\left[\mathcal{B}_{G-v,n_i}\cap\mathcal{E}_{\delta=2,n_i}\right]$ 
$\xrightarrow[]{i\to\infty}$ $0$: we first define 
$\mathcal{E}_{\forall v\colon \triangle\in G-v,n}$ $:=$ $\{$ $G\subseteq \upK^{n_i}\colon$ 
for every vertex $v$ of $G$ the graph $G-v$ contains a triangle $\}$ and note that, 
directly from the definitions
\begin{equation}\label{fdgrety5645665ety4r564534wr45r}
\mathcal{B}_{G-v,n}\cap\mathcal{E}_{\delta=2,n}\cap
\mathcal{E}_{\forall v\colon \triangle\in G-v,n} 
= \emptyset \quad . 
\end{equation} 
We now recall that (cf.~\cite[Theorem~3.4]{MR1782847}, for example) 
if $p(n-1)\gg (n-1)^{-1}$, then $G_{n-1,p}$ a.a.s. contains a triangle. 
Because of \eqref{fdsgretyhtytrw54etrhe56erw5re} and 
Lemma~\ref{fdgserwe45wre543er54ewrsfsdgwer5efds} we must have 
(arbitrarily choosing $\omega_n:=\log\log\log n$), 
$p(n) > \tfrac{\log n + \log\log n - \log\log\log n}{n}$ 
for sufficiently large $n$, hence $p(n) \gg (n-1)^{-1}$. 
Since for our $G\sim G_{n,p}$ we have $G-v\sim G_{n-1,p}$ for every $v\in\upV(G)$, 
it follows that with our $p(n)\gg (n-1)^{-1}$ we have the limit 
\begin{equation}\label{fdsgry4trsdfwgertytrd}
\Prob_{G_{n_i,p}}\left[\mathcal{E}_{\forall v\colon \triangle\in G-v,n_i}\right] 
\xrightarrow[]{i\to\infty} 1 \quad. 
\end{equation}
Again using that intersecting with an a.a.s. property does not change 
an asymptotic probability,  
\begin{align}\label{gwerty6rtsrtf43fret56er4er}
\limsup_{i\to\infty}\Prob_{G_{n_i,p}}\left[
\mathcal{B}_{G-v,n_i}\cap\mathcal{E}_{\delta=2,n_i}\right] 
& \By{\eqref{fdsgry4trsdfwgertytrd}}{=} \limsup_{i\to\infty}\Prob_{G_{n_i,p}}\left[
\mathcal{B}_{G-v,n_i}\cap\mathcal{E}_{\delta=2,n_i}\cap
\mathcal{E}_{\forall v\colon \triangle\in G-v,n_i}\right] \notag \\
& \By{\eqref{fdgrety5645665ety4r564534wr45r}}{=} 0 \quad .
\end{align}
From \eqref{gwerty6rtsrtf43fret56er4er}, \eqref{zdsfgrweytrehdfwt34y653453ew} 
and \eqref{adfsgr5serdfr4wre54dwrdf} 
now indeed follows \eqref{dsfg5ert65derswadferwg54sresd}. As already mentioned, 
this completes the proof of Lemma~\ref{fdsgrwtytrhdfs4w3w54ert65w43re}. 
\end{proof}

In \cite{MR3032903}, Glebov and Krivelevich found the asymptotic behaviour 
of the total number of Hamilton circuits in $G_{n,p}$, starting right from 
the hamiltonicity-threshold $p(n)=\tfrac{\log n + \log \log n + \omega(1)}{n}$. 
An obvious necessary condition to keep in mind when seeking to improve 
Theorem~\ref{540453.1053.0531.05432.06531.05120} is that the total number of 
Hamilton circuits a.a.s. be at least as large as 
$\dim_{\F_2}\upZ_1(G;\F_2) = \lVert G\rVert-\lvert G\rvert+1$. 
On the one hand, if $p(n) = \tfrac{\log n + \log \log n + \omega_n}{n}$ 
with $\omega\in\omega(1)$, 
we a.a.s. have $\lVert G\rVert \sim \tfrac12 n \log n$, hence a.a.s. 
$\dim_{\F_2}\upZ_1(G;\F_2) \sim \tfrac12 n \log n$; on the other hand, by 
\cite[Theorem~1]{MR3032903} and Stirling's approximation, 
$\lvert\mathcal{H}(G_{n,p})\rvert$ $\sim$ 
$(2\pi n)^{\frac12}\left((\log n+\log\log n+\omega_n)/\upe\right)^n(1-o(1))^n$ a.a.s., 
outgrowing $\tfrac12 n \log n$. So the above necessary condition is 
satisfied as soon as $G_{n,p}$ becomes hamiltonian at all. 
This prompts the question whether already right from the threshold for 
hamiltonicity, $G_{n,p}$ a.a.s. has each parity-permitted cycle a
symmetric difference of Hamilton circuits. We now answer that particular question 
in the negative: although in $G_{n,p}$ a.a.s. hamiltonicity necessarily comes with 
extras (e.g., \cite{MR1164841} \cite{MR3032903}) into the bargain, 
in particular it comes with far more Hamilton circuits 
than the dimension of $\upZ_1(G_{n,p};\F_2)$, 
the property $\upZ_1(G_{n,p};\F_2) = \langle\mathcal{H}(G_{n,p})\rangle_{\F_2}$ 
is \emph{not} guaranteed right from the start of a.a.s. hamiltonicity (we recall 
that by \cite[Theorem~1]{MR680304} a.a.s. hamiltonicity already begins 
for $p\geq \tfrac{\log n + \log\log n + \omega_n}{n}$): 

\begin{theorem}[{a Hamilton-generated cycle space does not appear 
right from the onset of hamiltonicity of $G_{n,p}$}]
\label{dsafrwe5ytrdsf43wrte65egrwterfdx}
If $p\in[0,1]^{\N}$ is such that with $G\sim G_{n,p}$ a.a.s. for odd $n$ 
\begin{enumerate}[label={\rm(\arabic{*})}]
\item\label{frw34wer43wer5wreds} 
$G$ is neither a forest nor a circuit\quad , 
\item\label{csvdfgr5etrwr43erwg65erw3rew} 
every cycle in $G$ is a 
symmetric difference of Hamilton circuits \quad , 
\end{enumerate}
then for every $c>0$ we have $p(n) > \tfrac{\log n + 2\log\log n + c}{n}$ 
for all sufficiently large odd $n$. 
\end{theorem}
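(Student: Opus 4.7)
The plan is to deduce Theorem~\ref{dsafrwe5ytrdsf43wrte65egrwterfdx} as a short corollary of Lemma~\ref{fdsgrwtytrhdfs4w3w54ert65w43re} together with the classical Poisson-regime asymptotics for the minimum degree of $G_{n,p}$, namely item~\ref{cxvsfg5re4erwfdgst54etrdfs5ew} of Lemma~\ref{dfsgr5etgrsfdwert54egrt4trd}. The strategy is proof by contradiction: Lemma~\ref{fdsgrwtytrhdfs4w3w54ert65w43re} forces $\delta(G)\geq 3$ a.a.s.\ under the hypotheses of the theorem, while at $p(n)=(\log n + 2\log\log n + c)/n$ the minimum degree of $G_{n,p}$ is concentrated on $\{2,3\}$ with \emph{both} values occurring with strictly positive limiting probability, so there is no way to reconcile those two facts.

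First I would invoke Lemma~\ref{fdsgrwtytrhdfs4w3w54ert65w43re}: hypotheses \ref{frw34wer43wer5wreds} and \ref{csvdfgr5etrwr43erwg65erw3rew} imply $\Prob_{G_{n,p}}[\delta(G)\geq 3]\xrightarrow[]{\mathrm{odd}\ n\to\infty} 1$. Next, assume for contradiction that the conclusion of the theorem fails: then there exist a constant $c>0$ and an infinite subsequence $(n_i)_{i\in\N}$ of odd integers along which $p(n_i)\leq p^*(n_i):=(\log n_i + 2\log\log n_i + c)/n_i$. By the standard monotone coupling $G_{n,p}\subseteq G_{n,p'}$ valid for $p\leq p'$, the minimum degree satisfies $\delta(G_{n_i,p})\leq\delta(G_{n_i,p^*})$ pointwise on the coupled probability space, and hence
\[
\Prob[\delta(G_{n_i,p})\geq 3]\leq\Prob[\delta(G_{n_i,p^*})\geq 3]\quad\text{for every $i\in\N$.}
\]

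Finally I would apply item~\ref{cxvsfg5re4erwfdgst54etrdfs5ew} of Lemma~\ref{dfsgr5etgrsfdwert54egrt4trd} to $p^*$ with $k=2$, the given constant $c$, and $h\equiv 0$: this yields $\Prob[\delta(G_{n,p^*})=2]\to 1-\exp(-\exp(-c/2))$ and $\Prob[\delta(G_{n,p^*})=3]\to \exp(-\exp(-c/2))$, and since these two limits sum to $1$, it follows that $\Prob[\delta(G_{n,p^*})\geq 3]\to\exp(-\exp(-c/2))<1$. Combined with the coupling inequality above, this yields $\limsup_{i\to\infty}\Prob[\delta(G_{n_i,p})\geq 3]\leq \exp(-\exp(-c/2))<1$, contradicting the first step. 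The only substantive piece of mathematics in this proof is Lemma~\ref{fdsgrwtytrhdfs4w3w54ert65w43re} itself; once that lemma and the standard Poisson minimum-degree asymptotics are accepted, the deduction is essentially formal and I anticipate no genuine obstacle.
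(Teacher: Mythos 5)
Your proposal is correct and follows essentially the same route as the paper's proof: a contradiction argument combining Lemma~\ref{fdsgrwtytrhdfs4w3w54ert65w43re}, monotonicity of the event $\{\delta\geq 3\}$ to compare $p$ with $p^*$, and the Poisson-regime minimum-degree asymptotics of Lemma~\ref{dfsgr5etgrsfdwert54egrt4trd}.\ref{cxvsfg5re4erwfdgst54etrdfs5ew} with $k=2$, $h\equiv 0$. The only cosmetic difference is that the paper additionally invokes Lemma~\ref{dfsgr5etgrsfdwert54egrt4trd}.\ref{fdgserwe45wre543er54ewrsfsdgwer5efds} (with $k=3$, $\omega_n=\log\log n - c$) to rule out $\delta\geq 4$ before identifying $\Prob[\delta\geq 3]$ with $\Prob[\delta=3]$, whereas you obtain the same conclusion by observing that the two limits in item~\ref{cxvsfg5re4erwfdgst54etrdfs5ew} sum to $1$, forcing $\Prob[\delta\geq 4]\to 0$; both deductions are valid.
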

\begin{proof}
Assume otherwise, i.e. there are $c>0$ and infinitely many odd $n$ with
\begin{equation}\label{zdfegrw54ersdfge4e3gr54egrttr}
p(n)\leq \tfrac{\log n + 2\log\log n + c}{n} \quad .
\end{equation} 
Since we could pass to an infinite subsequence of such $n$ and still argue 
as we do below, we may assume that \eqref{zdfegrw54ersdfge4e3gr54egrttr} 
holds for every odd $n$. We now argue that it is not true that 
a.a.s. $\delta(G)\geq 3$, 
contradicting Lemma~\ref{fdsgrwtytrhdfs4w3w54ert65w43re} and completing 
the proof of Theorem~\ref{dsafrwe5ytrdsf43wrte65egrwterfdx}. 

We define $p^+(n) := \tfrac{\log n + 2\log\log n + c}{n}$ for every $n$. 
By assumption $p(n)\leq p^+(n)$ for every $n$. Since the property $\delta(G)\geq 3$ 
is monotone increasing, 
$\Prob_{G_{n,p}}[\delta\geq 3] \leq \Prob_{G(n,p^+)}[\delta\geq 3]$ for every $n$, so
\begin{equation}\label{vbfdghrt6edfsaegrt6egr54e54erd}
\limsup_{n\to\infty}\Prob_{G_{n,p}}[\delta\geq 3] 
\leq \limsup_{n\to\infty}\Prob_{G(n,p^+)}[\delta\geq 3] \quad . 
\end{equation}

By Lemma~\ref{dfsgr5etgrsfdwert54egrt4trd}.\ref{fdgserwe45wre543er54ewrsfsdgwer5efds} 
with $k:=3$ and $\omega_n:=\log\log n - c$ 
it follows from the definition of $p^+$ that 
\begin{equation}\label{dfsgrwy5trdsgfr4gtde56dgsge45er54ed}
\Prob_{G(n,p^+)}[\delta(G)\leq 3]\xrightarrow[]{n\to\infty} 1\quad .  
\end{equation}

Since intersecting with an a.a.s. property does not change the $\limsup$ 
of the probabilities of the property, 
it follows from \eqref{dfsgrwy5trdsgfr4gtde56dgsge45er54ed} that 
\begin{equation}\label{dsfre456ersdfety4h5e645fs4edrfgdt5rtd} 
\limsup_{n\to\infty}\Prob_{G(n,p^+)}[\delta\geq 3] = 
\limsup_{n\to\infty}\Prob_{G(n,p^+)}[\delta\geq 3\ \mathrm{and}\ \delta\leq 3] = 
\limsup_{n\to\infty}\Prob_{G(n,p^+)}[\delta = 3] \quad .
\end{equation}
The definition of $p^+$ together with 
Lemma~\ref{dfsgr5etgrsfdwert54egrt4trd}.\ref{cxvsfg5re4erwfdgst54etrdfs5ew} 
with $k:=2$ and $h:=0$ ensures that we have the limit 
$\Prob_{G(n,p^+)}[\delta = 3]$ $\xrightarrow[]{n\to\infty}$ $\exp(-\exp(-c/2))$ 
which when substituted into \eqref{dsfre456ersdfety4h5e645fs4edrfgdt5rtd} yields 
\begin{equation}\label{dsfgt5y6edgrtew54ertr6ert54d}
\limsup_{n\to\infty}\Prob_{G(n,p^+)}[\delta\geq 3] = \exp(-\exp(-c/2)) \quad . 
\end{equation}
From \eqref{dsfgt5y6edgrtew54ertr6ert54d} 
and \eqref{vbfdghrt6edfsaegrt6egr54e54erd} we finally get a conclusion for $p$ proper: 
\begin{equation}\label{zfdsgerty4g3tg4tre64erw5erd}
\limsup_{n\to\infty}\Prob_{G_{n,p}}[\delta\geq 3] \leq \exp(-\exp(-c/2)) < 1\quad , 
\end{equation}
for any $c\in\R$. The bound \eqref{zfdsgerty4g3tg4tre64erw5erd} shows that 
with our $p$ it is not the 
case that $\Prob_{G_{n,p}}[\delta\geq 3]\xrightarrow[]{n\to\infty} 1$, 
the contradiction to Lemma~\ref{fdsgrwtytrhdfs4w3w54ert65w43re} already mentioned. 
\end{proof}

\section{Concluding remarks}

It seems very likely that the condition $p(n)\geq n^{-1/2+\varepsilon}$ 
in Theorem~\ref{540453.1053.0531.05432.06531.05120} can be significantly improved: 
apparently (work in progress), by adapting available embedding 
technology to the present purposes one can improve $-1/2$ to $-2/3$ 
in Theorem~\ref{540453.1053.0531.05432.06531.05120}. 
It also seems as if $p(n)\geq n^{-2/3+\varepsilon}$ is the utmost of what can 
be achieved with the technique of 
proving the existence of some spanning subgraph pre-selected from 
the monotone property $\mathcal{M}_{\{\lvert\cdot\rvert\},0}$: the graph $\upC_n^2$ 
is a non-minimal element of the monotone property 
$\mathcal{M}_{\lvert\cdot\lvert,0}$ (resp., for even $n$, 
of $\mathcal{M}_{\lvert\cdot\lvert,1}$), and it is possible to 
descend quite far by deleting edges to construct sparser `rebar', 
and yet $p(n)\geq n^{-2/3+\varepsilon}$ seems to be required even when using 
a minimal element of $\mathcal{M}_{\lvert\cdot\lvert,0}$. 

It nevertheless still seems plausible that one may even 
significantly improve the conjectured bound $n^{-2/3+\varepsilon}$, but this will 
require several new ideas: some customary arguments for showing that 
a set of vectors is a generating system carry with them an `instability' unusual 
for arguments about $G_{n,p_n}$. For example, an argument which were to randomly 
choose a dimension-sized subset of distinct Hamilton-circuits and then go on 
to prove this subset to be a.a.s. linearly independent over $\F_2$ would have 
to be so sensitive to the size of the subset so as to break down 
if only $1+\dim_{\F_2}\ \upZ_1(G;\F_2)$ circuits would have been selected instead. 
Nevertheless, it is not inconceivable that with the right ideas one can improve 
the threshold $n^{-1/2+\varepsilon}$ all the way 
down to what Theorem~\ref{dsafrwe5ytrdsf43wrte65egrwterfdx} allows, 
i.e. to $p(n)\geq \tfrac{\log n + 2 \log \log n + \omega_n}{n}$, 
the threshold for minimum degree $\geq 3$.

\bibliographystyle{amsplain} 
\bibliography{hamiltonspaceofrandomgraph_arXiv_version}

\end{document}